\newtheorem{theor}{Theorem}
\newtheorem{lemma}[theor]{Lemma}
\newtheorem{cor}[theor]{Corollary}
\newtheorem{conj}[theor]{Conjecture}
\newcommand{\la}{\lambda}
\newcommand{\al}{\alpha}
\newcommand{\be}{\beta}
\newcommand{\Om}{\Omega}
\newcommand{\Md}{\!\mod}
\begin{document}

\title[Divisibility by primes in character tables of symmetric groups]{{\bf On divisibility by primes in columns of character tables of symmetric groups}}

\author{\sc Lucia Morotti}
\address
{Institut f\"{u}r Algebra, Zahlentheorie und Diskrete Mathematik\\ Leibniz Universit\"{a}t Hannover\\ 30167 Hannover\\ Germany} 
\email{morotti@math.uni-hannover.de}

\subjclass[2010]{20C30}

\begin{abstract}
For an arbitrary prime $p$ we prove that the proportion of entries divisible by $p$ in certain columns of the character table of the symmetric group $S_n$ tends to 1 as $n\to\infty$. This is done by finding lower bounds on the number of $k$-cores, for $k$ large enough with respect to $n$.
\end{abstract}

\maketitle

\vspace{12pt}

\section{Introduction}

In \cite{m} Miller formulated the following conjecture about the character table of symmetric groups:
\begin{conj}\label{c2}\label{c1}
Let $p$ be a prime and $E_p(n)$ be the number of entries divisible by $p$ in the character table of $S_n$. Then $E_p(n)/(p(n))^2\to 1$ as $n\to \infty$.
\end{conj}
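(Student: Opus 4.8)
The plan is to prove the stronger counting statement that the number of ``bad'' pairs,
$B(n):=\#\{(\lambda,\mu):\lambda,\mu\vdash n,\ p\nmid\chi^\lambda(\mu)\}$, is $o((p(n))^2)$; since the table has exactly $(p(n))^2$ entries this is equivalent to Conjecture~\ref{c1}. Writing $B(n)=\sum_{\mu\vdash n}R(\mu)$ with $R(\mu):=\#\{\lambda:p\nmid\chi^\lambda(\mu)\}\le p(n)$, it suffices to produce a set $\mathcal{G}$ of ``good'' columns with $|\mathcal{G}|=(1-o(1))p(n)$ such that $R(\mu)=o(p(n))$ for every $\mu\in\mathcal{G}$: then $B(n)\le|\mathcal{G}^c|\,p(n)+|\mathcal{G}|\max_{\mu\in\mathcal{G}}R(\mu)=o((p(n))^2)$. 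So the entire problem reduces to exhibiting enough columns, each of which kills almost every row.

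First I would isolate the source of divisibility symmetric-function-theoretically. Using $\chi^\lambda(\mu)=\langle p_\mu,s_\lambda\rangle$, split each column as $\mu=\alpha\sqcup p\beta$, where $\alpha$ collects the parts of $\mu$ not divisible by $p$ and $p\beta$ the parts divisible by $p$, with $|\alpha|=a$ and $|p\beta|=pb$. Since $f\mapsto f^p$ is the Frobenius endomorphism of $\Lambda\otimes\mathbb{F}_p$ and agrees with the Adams operation $p_k\mapsto p_{pk}$ on generators, one gets $p_\beta^{\,p}\equiv p_{p\beta}\pmod p$, hence $p_\mu\equiv p_\alpha\,p_\beta^{\,p}\pmod p$; expanding through the coproduct of $\Lambda$ yields
\[
\chi^\lambda(\mu)\equiv\sum_{\substack{\rho\subseteq\lambda,\ |\rho|=pb\\ \rho\ \text{has empty }p\text{-core}}}\chi^{\lambda/\rho}(\alpha)\,\chi^{\rho}(p\beta)\pmod p ,
\]
because $\langle p_{p\beta},s_\rho\rangle=\chi^\rho(p\beta)$ vanishes unless the $p$-core of $\rho$ is empty. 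This is exactly what the Murnaghan--Nakayama rule produces when one peels off the $p'$-parts first, and it pins down the mechanism: divisibility is forced whenever the displayed signed sum collapses mod $p$.

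Next I would identify $\mathcal{G}$. One instructive but insufficient subfamily can be handled directly: if $\mu$ has a part $k$ exceeding every hook length of $\lambda$, then $\chi^\lambda(\mu)=0$, and the lower bounds on the number of $k$-cores (equivalently, on partitions avoiding a prescribed large hook length) dispose of columns with an atypically large part; but a uniformly random $\mu\vdash n$ has largest part only of order $\sqrt n\log n$, so this covers an $o(1)$ fraction of columns and cannot reach the conjecture by itself. The decisive case is the generic column, where both $a$ and $pb$ are of order $n$ and no single dominant part is available. Here I would iterate the shattering on $\beta$, reduce the inner values $\chi^{\rho}(p\beta)$ through the $p$-quotient to products of smaller character values weighted by multinomials $\binom{w}{w_0,\dots,w_{p-1}}$, and attempt to use Kummer's carry criterion to force the mod-$p$ sum to vanish for all but an exceptional family of $\lambda$ with atypically balanced $p$-quotient, of size $o(p(n))$.

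The hard part will be precisely this cancellation for generic columns. The naive hope, that $\chi^\lambda(\mu)\neq0$ forces $\lambda$ to have small $p$-core (which works when $\alpha$ is tiny, since for $p\ge3$ the partitions of small $p$-core are only a polynomially small, hence $o(p(n))$, fraction of $p(n)$), fails here: a large $p'$-part $\alpha$ can be removed so as to carry off a $p$-core of size up to order $n$, so nonvanishing no longer restricts the $p$-core of $\lambda$ at all, and for $p=2$ the core obstruction is too weak even when $\mu=p\beta$. What is genuinely required is therefore a cancellation statement about the signed, multinomially weighted sum over empty-$p$-core sub-shapes $\rho\subseteq\lambda$. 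I would attack it via the fully shattered representative $\mu^\ast$ (all parts coprime to $p$), for which $\chi^\lambda(\mu)\equiv\chi^\lambda(\mu^\ast)\pmod p$, exploiting that the columns carrying the bulk of the mass shatter to partitions $\mu^\ast$ with atypically large part-multiplicities; converting ``large multiplicities in $\mu^\ast$'' into divisibility of $\chi^\lambda(\mu^\ast)$ for almost all $\lambda$, uniformly over the dominant family of columns, is the crux on which the whole argument turns.
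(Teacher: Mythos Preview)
The statement you are attempting to prove is labelled a \emph{Conjecture} in the paper, not a theorem; the paper explicitly says that its results ``are not sufficient to prove Conjecture~\ref{c2}''. There is therefore no proof in the paper to compare your attempt against, only the partial results Theorem~\ref{t1} and Corollary~\ref{c4}.

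Your reduction (showing $B(n)=o(p(n)^2)$ via a set $\mathcal{G}$ of good columns with $R(\mu)=o(p(n))$ on $\mathcal{G}$) is sound, and the congruence $p_\mu\equiv p_\alpha p_\beta^{\,p}\pmod p$ together with the displayed expansion over empty-$p$-core subshapes is correct and essentially repackages \cite[Proposition~1]{m}. You also correctly diagnose that the ``one large part forces a $k$-core'' mechanism---which is exactly what the paper establishes in Theorem~\ref{t1} via Lemma~\ref{l3}---does not cover a $(1-o(1))$-fraction of columns and so cannot by itself yield the conjecture; the paper says the same.

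But your proposal is not a proof: what you call ``the crux on which the whole argument turns''---turning large part-multiplicities in $\mu^\ast$ into $p\mid\chi^\lambda(\mu^\ast)$ for all $\lambda$ outside a set of size $o(p(n))$, uniformly over the bulk of columns---is left entirely unexecuted. You describe the obstacle accurately (for generic columns nonvanishing places no constraint on the $p$-core of $\lambda$, and for $p=2$ even the pure $p\beta$ case is not controlled by cores), and your suggested tools ($p$-quotient iteration, Kummer's carry criterion on the multinomials) are reasonable heuristics, but no estimate or cancellation lemma is supplied. As written, you have reproduced the paper's frontier in different language and then given an honest statement of the open problem; the genuine gap is precisely the step you yourself flag as the crux.
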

Here, as in the rest of the paper, $p(n)$ is the number of partitions of $n$.

In \cite{g} Gluck proved that on certain columns of the character table of $S_n$, the proportion of even entries tends to 1. The main results of this paper extend this to a larger set of columns of the character table of $S_n$ and hold for any prime $p$. These results however are not sufficient to prove Conjecture \ref{c2}. In order to state our main results we need the following notation. Let $P(n)$ be the set of all partitions of $n$ and
\[\Om_p(n):=\{\mbox{partitions of }n\mbox{ into parts not divisible by }p\}.\]
Further for any partition $\mu$ of $n$ let $\mu^*\in\Om_p(n)$ be obtained from $\mu=(\mu_1,\mu_2,\ldots)$ by replacing each part $\mu_i=p^{k_i}a_i$ with $p\nmid a_i$ by $p^{k_i}$ parts $a_i$. Moreover, for $\la\in\Om_p(n)$, let
\[K_p(\la):=\{\mu\in P(n)|\mu^*=\la\}.\]

For $\al,\be\in P(n)$ let $\chi^\al$ be the irreducible character of $S_n$ indexed by $\al$ and $\chi^\al_\be$ be the value that $\chi^\al$ takes on the conjugacy class with cycle partition $\be$.

\begin{theor}\label{t1}
Let $p$ be a prime, $n\geq 2$, $c>\frac{\sqrt{3/2}}{\pi}$ and $\la=(a_1^{b_1},\ldots,a_h^{b_h})\in\Om_p(n)$. Assume that for some $1\leq i\leq h$ there exists $s$ with $p^s\leq b_i$ and $a_ip^s\geq c\sqrt{n}\log(n)$. Then for any $\mu\in K_p(\la)$ we have that
\[\frac{|\{\al\in P(n):\chi^\al_\mu\equiv 0\Md p\}|}{p(n)}\geq 1-\frac{c^4d\log(n)}{n^{c\pi/\sqrt{6}-1/2}}\]
for some constant $d$.
\end{theor}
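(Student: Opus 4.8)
The plan is to split the proof into a character-theoretic reduction to a statement about $k$-cores and a counting estimate for $k$-cores, with $k:=a_ip^s\le n$ and $k\ge c\sqrt n\log(n)$ by hypothesis.

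\emph{Character side.} The tool I would establish first is the congruence
\[\chi^\delta_\nu\equiv\chi^\delta_{\nu^+}\pmod p\qquad(\delta,\nu,\nu^+\vdash m)\]
whenever $\nu^+$ arises from $\nu$ by replacing one part $py$ by $p$ parts equal to $y$ (equivalently, $\nu$ arises from $\nu^+$ by fusing $p$ equal parts $y$ into one part $py$). By the Murnaghan--Nakayama rule one first peels off the parts common to $\nu$ and $\nu^+$ in the same order for both, reducing to $\nu=(py)$, $\nu^+=(y^p)$, $m=py$. Here $\chi^\delta_{(py)}$ is $0$ unless $\delta$ is a hook, in which case it is $(-1)^{r-1}$ with $r$ the number of rows, while $\chi^\delta_{(y^p)}$, computed through the $y$-quotient $(\delta^{(0)},\dots,\delta^{(y-1)})$ of $\delta$, is $0$ if the $y$-core of $\delta$ is non-empty and otherwise equals $\pm\binom{p}{|\delta^{(0)}|,\dots,|\delta^{(y-1)}|}\prod_j f^{\delta^{(j)}}$, where $f^{\delta^{(j)}}$ is the number of standard Young tableaux of shape $\delta^{(j)}$. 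Since $p$ is prime, the multinomial coefficient is divisible by $p$ unless the $y$-quotient is concentrated on a single runner with a partition of $p$, and $f^\gamma$ for $\gamma\vdash p$ is divisible by $p$ unless $\gamma$ is a hook; carrying the surviving cases back through the abacus (and matching signs) reproduces exactly the hook case and its value, which gives the congruence. Iterating it, for any $\mu\in K_p(\la)$ one gets $\chi^\al_\mu\equiv\chi^\al_{\mu^*}=\chi^\al_\la\pmod p$, and, fusing $p^s$ of the (at least $b_i\ge p^s$) parts of $\la$ equal to $a_i$ into one part $a_ip^s$ in $s$ rounds, $\chi^\al_\la\equiv\chi^\al_{\tilde\la}\pmod p$ for a partition $\tilde\la\vdash n$ with a part equal to $k=a_ip^s\le n$. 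Peeling that part by the Murnaghan--Nakayama rule, $\chi^\al_{\tilde\la}$ is a signed sum over removable rim $k$-hooks of $\al$, so $\chi^\al_{\tilde\la}=0$, hence $\chi^\al_\mu\equiv0\Md p$, whenever $\al$ is a $k$-core. Thus the set in the statement contains all $k$-cores of $n$.

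\emph{Counting side.} It then suffices to show that the number of partitions of $n$ that are not $k$-cores is at most $\frac{c^4d\log(n)}{n^{c\pi/\sqrt6-1/2}}p(n)$. A partition of $n$ fails to be a $k$-core iff it has a removable rim $k$-hook; for any partition the number of addable rim $k$-hooks exceeds the number of removable ones by exactly $k$ (one on each of the $k$ runners of the $k$-abacus), so removing a rim $k$-hook is a bijection between pairs $(\al\vdash n,\text{removable rim }k\text{-hook})$ and $(\gamma\vdash n-k,\text{addable rim }k\text{-hook})$, and unwinding the resulting recursion the number of such pairs is $k\sum_{j\ge1}p(n-jk)$; in particular the number of non-$k$-cores of $n$ is $\le k\sum_{j\ge1}p(n-jk)$. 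Inserting $k\ge c\sqrt n\log(n)$, the classical bounds $p(m)\le C_1e^{\pi\sqrt{2m/3}}/(m+1)$ and $p(n)\ge e^{\pi\sqrt{2n/3}}/(C_2n)$, and $\sqrt n-\sqrt{n-jk}\ge jk/(2\sqrt n)$ with $\sqrt{2/3}/2=1/\sqrt6$, the $j=1$ term dominates:
\[\frac{k\,p(n-k)}{p(n)}\ \lesssim\ k\,e^{-\pi k/(\sqrt6\sqrt n)}=\sqrt n\cdot t\,e^{-\pi t/\sqrt6}\ \le\ c\log(n)\cdot n^{1/2-c\pi/\sqrt6},\]
where $t:=k/\sqrt n\ge c\log(n)$ and $t\mapsto te^{-\pi t/\sqrt6}$ is decreasing for $t>\sqrt6/\pi$; the tail over $j\ge2$ is a geometric series of ratio $\le n^{-c\pi/\sqrt6}$ and is of lower order. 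Dividing by $p(n)$ yields the claimed lower bound, the powers of $c$ and the constant $d$ absorbing the explicit constants and the treatment of small $n$ and of the range $k>n/2$ (where $p(n-k)/p(n)$ is super-polynomially small).

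\emph{Main difficulty.} The technical heart is the congruence of the first part, in particular the bookkeeping of signs and of the surviving terms in the $y$-quotient Murnaghan--Nakayama expansion and their precise identification with the hook case; the remainder is a direct use of the Murnaghan--Nakayama rule together with standard estimates for $p(n)$.
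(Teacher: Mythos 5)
Your proposal is correct and follows the same overall strategy as the paper: reduce, via a mod~$p$ column congruence, to a representative of $K_p(\la)$ possessing a part of size $k=a_ip^s\geq c\sqrt{n}\log(n)$, note via Murnaghan--Nakayama that $\chi^\al$ vanishes there whenever $\al$ is a $k$-core, and then bound the proportion of non-$k$-cores by Hardy--Ramanujan estimates. The two halves are, however, executed differently. On the character side the paper does not prove the congruence $\chi^\al_\mu\equiv\chi^\al_{\mu^*}\Md p$ at all --- it is quoted as Proposition~1 of Miller \cite{m}, and the paper's explicit representative $\overline{\la}$ (built from the $p$-adic expansions of all multiplicities $b_j$) plays the role of your $\tilde\la$ obtained by fusing $p^s$ copies of $a_i$. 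You instead rederive Miller's congruence from the $y$-quotient evaluation of $\chi^\delta_{(y^p)}$ and the elementary divisibility of $\binom{p}{|\delta^{(0)}|,\dots,|\delta^{(y-1)}|}$; the outline is sound and is essentially how that result is proved, but the sign bookkeeping and the identification of hooks of size $py$ with the surviving quotient configurations, which you explicitly defer, are the one place where your write-up is not yet a proof and would need to be completed or replaced by the citation. On the counting side your argument is genuinely different from the paper's: you bound the number of non-$k$-cores by $k\sum_{j\geq 1}p(n-jk)$ using the fact that every partition has exactly $k$ more addable than removable rim $k$-hooks, whereas Lemmas \ref{l1} and \ref{l2} obtain $p(n)-c_k(n)\leq (k+1)p(n-k)$ from the identity $p(n)-c_k(n)=\sum_m c_k(n-mk)p_k(m)$ together with $p_k(m)\leq(k+1)p_k(m-1)$. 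Both routes give the same order $k\,p(n-k)$, and your asymptotic treatment (monotonicity of $te^{-\pi t/\sqrt{6}}$, with the ranges $k>n/2$ and $k=n$ handled separately since the two-sided Hardy--Ramanujan bound is only used for $m\geq 1$ and the factor $n/(n-k)$ must be controlled) matches the case analysis of Lemma \ref{l3}; your bijective count is arguably the more transparent of the two, at the cost of an extra, harmless tail over $j\geq 2$.
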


Note that $\frac{\log(n)}{n^{c\pi/\sqrt{6}-1/2}}\to 0$ since $c>\frac{\sqrt{3/2}}{\pi}$.

\begin{cor}\label{c4}
Let $p$ be a prime, $n\geq 2$, $c>\frac{\sqrt{3/2}}{\pi}$ and $\la=(a_1^{b_1},\ldots,a_h^{b_h})\in\Om_p(n)$. If $h\leq \frac{\sqrt{n}}{cp\log(n)}$, then for any $\mu\in K_p(\la)$ we have that
\[\frac{|\{\al\in P(n):\chi^\al_\mu\equiv 0\Md p\}|}{p(n)}\geq 1-\frac{c^4d\log(n)}{n^{c\pi/\sqrt{6}-1/2}}\]
for some constant $d$.
\end{cor}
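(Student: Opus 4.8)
The plan is to obtain Corollary \ref{c4} as an immediate consequence of Theorem \ref{t1}, by checking that the numerical hypothesis of that theorem --- the existence of an index $i$ and an exponent $s$ with $p^s\leq b_i$ and $a_ip^s\geq c\sqrt n\log(n)$ --- is forced as soon as the number $h$ of distinct part sizes is as small as assumed. So nothing new needs to be proved about character values; the whole content is a pigeonhole estimate feeding into Theorem \ref{t1}.

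First I would use the identity $\sum_{j=1}^h a_jb_j=n$: since this is a sum of $h$ terms, there is some $1\leq i\leq h$ with $a_ib_i\geq n/h$, and combining this with $h\leq \frac{\sqrt n}{cp\log(n)}$ gives $a_ib_i\geq cp\sqrt n\log(n)$. Next, for that fixed $i$ I would take $s\geq 0$ to be the largest integer with $p^s\leq b_i$; such an $s$ exists because $b_i\geq 1$ (and note $s=0$ is permitted, so no extra lower bound on $b_i$ is needed). Maximality of $s$ gives $p^{s+1}>b_i$, hence $p^s>b_i/p$, and therefore
\[ a_ip^s>\frac{a_ib_i}{p}\geq\frac{cp\sqrt n\log(n)}{p}=c\sqrt n\log(n). \]
Thus the pair $(i,s)$ satisfies $p^s\leq b_i$ and $a_ip^s\geq c\sqrt n\log(n)$, which is exactly the hypothesis needed; applying Theorem \ref{t1} to $\la$ and the given $\mu\in K_p(\la)$ then yields the stated bound with the same constant $d$.

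I do not expect a genuine obstacle here. The only point requiring a little care is the loss of a factor $p$ when passing from the bound on $b_i$ to the bound on $p^s$ (one could have $b_i$ just below a power of $p$), but this is precisely what the extra factor $p$ in the hypothesis $h\leq\frac{\sqrt n}{cp\log(n)}$ is designed to absorb, so the argument goes through cleanly.
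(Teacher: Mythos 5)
Your proposal is correct and follows exactly the paper's route: the paper derives Corollary \ref{c4} from Theorem \ref{t1} by noting that the hypothesis on $h$ forces some $i$ with $a_ib_i\geq cp\sqrt{n}\log(n)$, and your pigeonhole argument on $\sum_j a_jb_j=n$ together with the choice of $s$ maximal with $p^s\leq b_i$ is precisely the (left implicit) verification that the hypotheses of Theorem \ref{t1} then hold. Your explicit treatment of the factor $p$ lost in passing from $b_i$ to $p^s$ is the one detail the paper omits, and it is handled correctly.
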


Corollary \ref{c4} easily follows from Theorem \ref{t1}, since under the assumptions of Corollary \ref{c4} there exists $i$ with $a_ib_i\geq cp\sqrt{n}\log(n)$ and then the assumptions of Theorem \ref{t1} are satisfied.

If $\mu,\nu\in K_p(\la)$ then the two columns of the character table of $S_n$ corresponding to conjugacy classes with cycle partitions $\mu$ and $\nu$ are congruent modulo $p$ (see \cite[Proposition 1]{m}). In particular the numbers of character values divisible by $p$ in the two columns are equal. This explains why Theorem \ref{t1} and Corollary \ref{c4} only have assumptions on $\la=\mu^*$ and not on $\mu$.

\section{Proof of Theorem \ref{t1}}

Given a positive integer $k$ and a partition $\gamma$, we say that $\gamma$ is a $k$-core if $\gamma$ has no hook of length divisible by $k$. For any partition $\beta$ of $n$ and a positive integer $k$, one can define its $k$-core partition $\gamma$ to be the partition obtained from $\beta$ by recursively removing as many $k$-hooks as possible ($\gamma$ does not depend on which maximal sequence of $k$-hooks is removed from $\beta$), thus $|\beta|=|\gamma|+mk$ for a certain non-negative integer $m$ (see for example \cite[Section 3]{o}). 

For any integer $m\geq 0$ let $p_k(m)$ be the number of multipartitions of $m$ into $k$ partitions. For any non-negative integer $m$ and any $k$-core partition $\gamma$ of $n-km$, the number of partitions of $n$ with $k$-core $\gamma$ is always equal to $p_k(m)$ (see for example \cite[Proposition 3.7]{o}).

We start by finding bounds on the number of $k$-core partitions of $n$ when $k$ is large enough. To obtain these bounds we will need bounds on the growth of the number of multipartitions, which will allow us to find lower bounds on $c_k(n)$, the number of $k$-core partitions of $n$. These results will then allow us to prove Theorem \ref{t1} at the end of this section.

\begin{lemma}\label{l1}
Let $k\geq 1$ and $m\geq 1$. Then $p_k(m)\leq (k+1)p_k(m-1)$.
\end{lemma}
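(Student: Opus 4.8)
The plan is to give a direct injective argument, generalising the classical elementary proof that $p(m)\le 2p(m-1)$ for ordinary partitions. I would regard a multipartition into $k$ partitions as a partition of $m$ whose parts are coloured with $k$ colours — the component $\lambda^{(i)}$ contributing one part of colour $i$ for each of its parts — so that $p_k(m)$ counts multisets of coloured parts, i.e. of pairs (size, colour) with size $\ge 1$, of total size $m$. I then split the multipartitions of $m$ into the set $A$ of those having at least one part equal to $1$ and the set $B$ of those whose parts are all $\ge 2$, so that $p_k(m)=|A|+|B|$, and bound $|A|$ and $|B|$ separately.

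For $A$: send $\underline{\lambda}\in A$ to the pair $(\underline{\mu},c)$, where $c\in\{1,\dots,k\}$ is some fixed colour in which $\underline{\lambda}$ has a part equal to $1$ and $\underline{\mu}$ is $\underline{\lambda}$ with one such part deleted; since $\underline{\lambda}$ is recovered from $(\underline{\mu},c)$ by re-adjoining a part equal to $1$ of colour $c$, this is an injection of $A$ into the product of the set of multipartitions of $m-1$ with $\{1,\dots,k\}$, so $|A|\le k\,p_k(m-1)$. For $B$: if $m=1$ then $B=\varnothing$; if $m\ge2$, send $\underline{\lambda}\in B$ to the multipartition $\underline{\mu}$ of $m-1$ obtained by decreasing by $1$ one copy of the smallest part value $a$ occurring anywhere in $\underline{\lambda}$, taken in the smallest-indexed component having a part equal to $a$. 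Since $\underline{\lambda}$ has no part equal to $a-1$, the modified cell sits in the unique part of $\underline{\mu}$ of minimal size, so $\underline{\lambda}$ can be reconstructed from $\underline{\mu}$ and the map is injective: $|B|\le p_k(m-1)$. Adding the two bounds gives $p_k(m)=|A|+|B|\le k\,p_k(m-1)+p_k(m-1)=(k+1)p_k(m-1)$.

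The only delicate point — the main obstacle — is the injectivity of the map on $B$. A map that removes an arbitrary box is wildly non-injective, and even removing a box from a \emph{largest} part already fails for $k=1$ (for instance $(3,3)$ and $(4,2)$ would both map to $(3,2)$). Removing it from the \emph{smallest} part, with a fixed colour tie-break, is exactly what makes the removed cell reconstructible, and one has to check that the minimal part of the image is attained by a single (size, colour) pair; everything else is routine bookkeeping. (An alternative would be to work with the generating function $\prod_{j\ge1}(1-x^j)^{-k}=(1-x)^{-k}\prod_{j\ge2}(1-x^j)^{-k}$ and compare coefficients using $\binom{k+r-1}{r}\le(k+1)\binom{k+r-2}{r-1}$ for $r\ge1$; but this leaves a residual term coming from the parts-$\ge2$ factor and seems to need extra case analysis for small $m$, so I would prefer the combinatorial route above.)
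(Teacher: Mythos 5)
Your proof is correct, but it runs in the opposite direction from the paper's. The paper argues by \emph{covering}: to each multipartition $\la=(\la^1,\ldots,\la^k)$ of $m-1$ it attaches a set $A(\la)$ of at most $k+1$ multipartitions of $m$ (add a node to the last nonempty component $\la^h$ either at the end of its last row or at the bottom of its first column, or add a single node to one of the later, empty, components), and checks that every multipartition of $m$ lies in some $A(\la)$; summing $|A(\la)|\leq k+1$ over $\la$ gives the bound with no case distinction. You instead build an explicit injection of the multipartitions of $m$ into $(k+1)$ copies of those of $m-1$, splitting according to whether a part equal to $1$ is present: deleting a part $1$ (remembering only its colour) accounts for the factor $k$, and decrementing the unique smallest part when all parts are $\geq 2$ accounts for the $+1$. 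Your key point --- that after decrementing the global minimum $a\geq 2$ the resulting multipartition has a \emph{unique} smallest part $a-1$, so the move is reversible --- is exactly right, and the $A$-part injection is also sound since the deleted colour is recorded. The paper's version is a bit shorter and avoids the $A$/$B$ split; yours has the minor aesthetic advantage of being a genuine injection (so one sees directly which of the $(k+1)p_k(m-1)$ slots each multipartition occupies), and of cleanly separating the sources of the $k$ and the $1$ in the factor $k+1$. Both are elementary and give the same (non-sharp) bound, which is all the paper needs.
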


\begin{proof}
For $\la=(\la^1,\ldots,\la^k)$ a multipartition of $m-1$ let $h$ be maximal such that $|\la^h|>0$ (set $h=0$ if $m=1$) and let $A(\la)$ be the set of multipartitions of $m$ which can be obtain by adding a node either to $\la^h$ on the last row or the first column or by adding one node to some $\la^i$ with $i>h$. Note that $|A(\la)|\leq k+1$ for each $\la$ and any multipartition of $m$ is contained in $A(\la)$ for some multipartition $\la$ of $m-1$. The result follows.
\end{proof}

\begin{lemma}\label{l2}
For any $1\leq k\leq n$ we have $p(n)-c_k(n)\leq (k+1)p(n-k)$.
\end{lemma}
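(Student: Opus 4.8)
The plan is to sort the partitions of $n$ by their $k$-core and then apply Lemma~\ref{l1} termwise. Concretely, I would start from the partition identity
\[p(n)=\sum_{m\geq 0}c_k(n-km)\,p_k(m),\]
with the convention $c_k(j)=0$ for $j<0$ (and $p_k(0)=1$). This is just the statement, recalled above, that every partition $\beta$ of $n$ has a well-defined $k$-core $\gamma$ with $|\beta|=|\gamma|+km$ for some $m\geq 0$, that for a fixed $k$-core $\gamma$ of size $n-km$ the number of partitions of $n$ with that $k$-core is $p_k(m)$, and that the number of possible such $\gamma$ is $c_k(n-km)$. The $m=0$ term is exactly $c_k(n)$, so
\[p(n)-c_k(n)=\sum_{m\geq 1}c_k(n-km)\,p_k(m).\]

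Next I would bound each summand using Lemma~\ref{l1}: for every $m\geq 1$ we have $p_k(m)\leq (k+1)p_k(m-1)$, hence
\[p(n)-c_k(n)\leq (k+1)\sum_{m\geq 1}c_k(n-km)\,p_k(m-1).\]
Finally, reindexing with $m'=m-1$ and applying the same counting identity with $n-k$ in place of $n$ (which is legitimate since $1\leq k\leq n$, so $n-k\geq 0$) gives
\[\sum_{m\geq 1}c_k(n-km)\,p_k(m-1)=\sum_{m'\geq 0}c_k\big((n-k)-km'\big)\,p_k(m')=p(n-k),\]
and the lemma follows.

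I do not expect any genuine obstacle here: all the real content sits in Lemma~\ref{l1} and in the standard $k$-core decomposition of partitions, both of which are available. The only points requiring a little care are the boundary conventions ($c_k(j)=0$ for $j<0$ and $p_k(0)=1$), making sure the reindexing does not drop or duplicate terms, and the extreme case $k=n$, where the right-hand side is $(n+1)p(0)=n+1$ and the inequality $p(n)-c_n(n)\leq n+1$ must still hold (it does, since the sum $\sum_{m\geq 1}c_n(n-nm)p_n(m)$ then has only the single term $m=1$, equal to $c_n(0)p_n(1)=n$).
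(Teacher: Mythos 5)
Your proof is correct and follows essentially the same route as the paper: decompose $p(n)$ by $k$-cores, peel off the $m=0$ term, apply Lemma \ref{l1} termwise, and reindex the sum to recognize $p(n-k)$. The extra care you take with boundary conventions and the $k=n$ case is sound but not needed beyond what the paper's own computation already handles.
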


\begin{proof}
It follows from Lemma \ref{l1} and the classification of partitions with the same $k$-core (see for example \cite[Proposition 3.7]{o}), since
\begin{align*}
p(n)-c_k(n)&=\sum_{m=1}^{\lfloor n/k\rfloor}c_k(n-mk)p_k(m)\\
&\leq (k+1)\sum_{m=1}^{\lfloor n/k\rfloor}c_k(n-k-(m-1)k)p_k(m-1)\\
&=(k+1)p(n-k).
\end{align*}
\end{proof}

\begin{lemma}\label{l3}
Let $n\geq 2$ and $c>\frac{\sqrt{3/2}}{\pi}$. If $k\geq c\sqrt{n}\log(n)$ then
\[\frac{c_k(n)}{p(n)}\geq 1-\frac{c^4d\log(n)}{n^{c\pi/\sqrt{6}-1/2}}\]
for some constant $d$.
\end{lemma}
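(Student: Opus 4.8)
The plan is to combine Lemma~\ref{l2} with the Hardy--Ramanujan asymptotics for the partition function. Write $K:=\pi\sqrt{2/3}$, so that $p(m)=(1+o(1))\,\frac{1}{4\sqrt3\,m}\,e^{K\sqrt m}$ and hence there are absolute constants $A\geq B>0$ with $\frac Bm e^{K\sqrt m}\leq p(m)\leq\frac Am e^{K\sqrt m}$ for all $m\geq1$. Record the numerical identities $K/2=\pi/\sqrt6$ and $Kc_0=1$, where $c_0:=\sqrt{3/2}/\pi$; thus $c\pi/\sqrt6-1/2=cK/2-1/2>0$ precisely because $c>c_0$. By Lemma~\ref{l2} it is enough to prove
\[(k+1)\,\frac{p(n-k)}{p(n)}\;\leq\; c^4d\,\log(n)\,n^{1/2-c\pi/\sqrt6}\]
for every $k\geq c\sqrt n\log n$, with $d$ an absolute constant to be fixed at the end. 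Two ranges of $k$ are immediate. If $k>n$, then the largest hook length of any partition of $n$ is at most $n<k$, so every partition of $n$ is a $k$-core, $c_k(n)=p(n)$, and there is nothing to prove. If $k=n$, then $p(n)-c_n(n)\leq(n+1)p(0)=n+1$ by Lemma~\ref{l2}; moreover $k=n\geq c\sqrt n\log n$ forces $c\leq\sqrt n/\log n$, whence $c^4d\log(n)n^{1/2-c\pi/\sqrt6}\geq c_0^4 d\log(n)\,n^{1/2}e^{-K\sqrt n/2}$, while $\tfrac{n+1}{p(n)}\leq\tfrac{n(n+1)}{B}e^{-K\sqrt n}$, and the latter lies below the former for all large $n$ (the finitely many small $n$ being absorbed into $d$).

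So assume $c\sqrt n\log n\leq k\leq n-1$; in particular $c<\sqrt n/\log n$. The bounds on $p$ give
\[(k+1)\,\frac{p(n-k)}{p(n)}\;\leq\;\frac AB\,\frac{n(k+1)}{n-k}\,e^{-K(\sqrt n-\sqrt{n-k})}\;=:\;\frac AB\,F(k).\]
The key point is that $F$ (as a function of a real variable) is decreasing on $[\,c\sqrt n\log n,\,n-1\,]$ once $n\geq N_0$ for a suitable absolute constant $N_0$: one has $\frac{d}{dk}\log F(k)=\frac1{k+1}+\frac1{n-k}-\frac{K}{2\sqrt{n-k}}$, and since $\frac1{n-k}<\frac{K}{2\sqrt{n-k}}$ for $n-k\geq1$ while in fact $\frac{K}{2\sqrt{n-k}}-\frac1{n-k}\geq\frac{K}{4\sqrt{n-k}}$ once $n-k\geq3$, this is $<0$ as soon as $\frac{1}{c\sqrt n\log n}\leq\frac{1}{k+1}<\frac{K}{4\sqrt{n-k}}$, which holds whenever $c\log n>4/K$, i.e.\ (using $Kc_0=1$) whenever $\log n>4$; the remaining finitely many values $n-k\leq2$ are handled directly. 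Hence, since $\lceil c\sqrt n\log n\rceil\leq n-1$,
\[(k+1)\,\frac{p(n-k)}{p(n)}\;\leq\;\frac AB\,F\!\big(\lceil c\sqrt n\log n\rceil\big)\;\leq\;\frac AB\,F\!\big(c\sqrt n\log n\big).\]

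It remains to estimate $F(c\sqrt n\log n)$. From $\sqrt n-\sqrt{n-k}\geq\frac{k}{2\sqrt n}\geq\frac{c\log n}{2}$ the exponential factor is at most $e^{-Kc\log n/2}=n^{-c\pi/\sqrt6}$, so we are left with the rational prefactor $R:=\frac{n(c\sqrt n\log n+1)}{\,n-c\sqrt n\log n\,}$. If $c\sqrt n\log n\leq n/2$, then $R\leq2(c\sqrt n\log n+1)\leq 4c\sqrt n\log n$ (for $n$ large), giving $(k+1)\frac{p(n-k)}{p(n)}\leq\frac{4A}{B}\,c\,\log(n)\,n^{1/2-c\pi/\sqrt6}$, which is at most the claimed bound provided $d\geq 4A/(Bc^3)$; since $c\geq c_0$, the value $4A/(Bc_0^3)$ suffices. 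If instead $c\sqrt n\log n>n/2$, then $c>\sqrt n/(2\log n)$, so $c^4>n^2/(16(\log n)^4)$; here $R\leq n^2$ (as $n-c\sqrt n\log n\geq1$ and $c\sqrt n\log n+1\leq n$), so $(k+1)\frac{p(n-k)}{p(n)}\leq\frac AB n^2 n^{-c\pi/\sqrt6}$, and this is $\leq c^4d\log(n)n^{1/2-c\pi/\sqrt6}$ as soon as $\frac AB n^{3/2}\leq c^4d\log n$, which follows from $c^4>n^2/(16(\log n)^4)$ once $d\geq\frac{16A}{B}\sup_{n\geq2}\frac{(\log n)^3}{\sqrt n}$ (a finite quantity). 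Taking $d$ to be the maximum of these absolute constants, together with what is needed for the finitely many $n<N_0$ — for which $c$ is bounded and $c\pi/\sqrt6-1/2>0$, so one simply arranges $c^4d\log(n)\geq n^{\,c\pi/\sqrt6-1/2}$ — completes the proof.

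The technical heart, and the step requiring the most care, is the final estimate: it must be uniform over the whole range $c\sqrt n\log n\leq k\leq n$ and must produce a single $d$ valid for every admissible $c$. When $k$ is comparable to $n$ one cannot afford to replace $e^{-K(\sqrt n-\sqrt{n-k})}$ by the crude consequence $e^{-\pi k/\sqrt{6n}}$, which is why establishing the monotonicity of $F$ — collapsing the whole problem to the single value $k=c\sqrt n\log n$, where $k$ is small relative to $n$ — is what does the real work; meanwhile the factor $c^4$ in the statement and the constraint $c<\sqrt n/\log n$ forced by $k\leq n$ are exactly what make a universal constant $d$ achievable.
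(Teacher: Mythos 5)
Your proof is correct and rests on the same two ingredients as the paper's: Lemma~\ref{l2} together with the Hardy--Ramanujan bounds $\frac{d_1}{m}e^{\pi\sqrt{2m/3}}\leq p(m)\leq\frac{d_2}{m}e^{\pi\sqrt{2m/3}}$. The difference lies in how the range of $k$ is handled. The paper keeps the crude exponential bound $e^{-\pi k/\sqrt{6n}}$ throughout and splits into three cases: $k\geq 4c\sqrt{n}\log(n)$; $k=\overline{c}\sqrt{n}\log(n)$ with $c\leq\overline{c}<4c$ and $k\leq n/2$; and the same with $k>n/2$, where the inequality $n<d_3\overline{c}^{\,3}$ converts the polynomial loss into the factor $\overline{c}^{\,4}\leq 256c^4$. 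You instead prove that the bounding function $F(k)=\frac{n(k+1)}{n-k}e^{-K(\sqrt{n}-\sqrt{n-k})}$ is decreasing in $k$ on the relevant range, collapsing everything to the single boundary value $k=c\sqrt{n}\log(n)$ followed by a two-case split on whether that value exceeds $n/2$; this is a clean alternative that makes the origin of the factor $c^4$ quite transparent, and you are also more careful than the paper in explicitly disposing of $k>n$ (where every partition is a $k$-core). Two small points, neither of which affects validity. First, the inequality chain in your monotonicity step is written backwards: $k\geq c\sqrt{n}\log(n)$ gives $\frac{1}{k+1}\leq\frac{1}{c\sqrt{n}\log(n)}$, not the reverse; the comparison you actually need and use, namely $\frac{1}{c\sqrt{n}\log(n)}<\frac{K}{4\sqrt{n-k}}$ when $c\log(n)>4/K$, is correct. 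Second, your closing claim that one ``cannot afford'' to replace $e^{-K(\sqrt{n}-\sqrt{n-k})}$ by $e^{-\pi k/\sqrt{6n}}$ when $k$ is comparable to $n$ is not accurate: the paper uses exactly that replacement in every case, and for $k>n/2$ the resulting polynomial loss is absorbed because $c$ is then forced to be of order $\sqrt{n}/\log(n)$, so the $c^4$ in the target bound swallows the extra powers of $n$ --- which is in fact the same mechanism you invoke in your own second case.
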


\begin{proof}
From Lemma \ref{l2} we have that
\[\frac{p(n)-c_k(n)}{p(n)}\leq\frac{(k+1)p(n-k)}{p(n)}.\]

Note that there exist constants $d_1,d_2>0$ such that for any $m\geq 1$
\[\frac{d_1}{m}e^{\pi\sqrt{\frac{2m}{3}}}\leq p(m)\leq \frac{d_2}{m}e^{\pi\sqrt{\frac{2m}{3}}}\]
(see \cite[(1.41)]{hr}). Using the inequalities displayed above, we see that the statement holds for $k=n$, so we may assume that $n-k\geq 1$. Then
\[\frac{(k+1)p(n-k)}{p(n)}\leq \frac{d_2(k+1)n}{d_1(n-k)}e^{-\pi\sqrt{\frac{2n}{3}}(1-\sqrt{1-\frac{k}{n}})}\leq\frac{2d_2kn}{d_1(n-k)}e^{-\frac{\pi k}{\sqrt{6n}}}.\]

If $k\geq 4c\sqrt{n}\log(n)$ then
\[\frac{p(n)-c_k(n)}{p(n)}\leq\frac{2d_2n^2}{d_1}e^{-\frac{4c\pi\log(n)}{\sqrt{6}}}=\frac{2d_2}{d_1n^{4(c\pi/\sqrt{6}-1/2)}}\]
so in this case the lemma holds, since $\frac{c\pi}{\sqrt{6}}-\frac{1}{2}>0$ by assumption on $c$.

If $k=\overline{c}\sqrt{n}\log(n)$ with $c\leq\overline{c}< 4c$ and $k\leq n/2$ then
\[\frac{p(n)-c_k(n)}{p(n)}\leq\frac{4d_2\overline{c}\sqrt{n}\log(n)}{d_1}e^{-\frac{\pi\overline{c}\log(n)}{\sqrt{6}}}\leq \frac{16cd_2\log(n)}{d_1n^{c\pi/\sqrt{6}-1/2}}\]
so that also in this case the lemma holds.

If $k=\overline{c}\sqrt{n}\log(n)$ with $c\leq\overline{c}< 4c$ and $k>n/2$ then $\overline{c}>\sqrt{n}/(2\log(n))$. Since $d_3\sqrt{n}>8(\log(n))^3$ for $d_3$ large enough, there exists a constant $d_3$ such that $n<d_3\overline{c}^3$. It then follows that
\[\frac{p(n)-c_k(n)}{p(n)}\leq\frac{2d_2d_3\overline{c}^4\sqrt{n}\log(n)}{d_1}e^{-\frac{\pi\overline{c}\log(n)}{\sqrt{6}}}\leq \frac{512c^4d_2d_3\log(n)}{d_1n^{c\pi/\sqrt{6}-1/2}}.\]
\end{proof}

We are now ready to prove Theorem \ref{t1}.

\begin{proof}[Proof of Theorem \ref{t1}]
Let $\la=(a_1^{b_1},\ldots,a_h^{b_h})\in\Om_p(n)$ and assume that there exists $i$ and $s$ with $p^s\leq b_i$ and $a_ip^s\geq c\sqrt{n}\log(n)$. For any $1\leq j\leq h$ let $b_j=f_{j,0}p^0+\ldots+f_{j,g_j}p^{g_j}$ be the $p$-adic decomposition of $b_j$ and set $\overline{\delta}_j:=((p^{g_j})^{f_{j,g_j}},\ldots,1^{f_{j,0}})$ and $\overline{\la}:=a_1\overline{\delta}_1\cup\ldots\cup a_h\overline{\delta}_h$
(if $\phi=(\phi_1,\ldots,\phi_r)$ and $\psi=(\psi_1,\ldots,\psi_s)$ are partitions and $t$ is a non-negative integer, then $t\phi=(t\phi_1,\ldots,t\phi_r)$ and $\phi\cup\psi$ is the partition obtained by rearranging the parts of $(\phi_1,\ldots,\phi_r,\psi_1,\ldots,\psi_s)$). Then $\overline{\la}\in K_p(\la)$ and by assumption $\overline{\la}_1\geq c\sqrt{n}\log(n)$.

Note that for any partition $\mu\in K_p(\la)$ we have from \cite[Proposition 1]{m} that
\[\frac{|\{\al\in P(n):\chi^\al_\mu\equiv 0\Md p\}|}{p(n)}=\frac{|\{\al\in P(n):\chi^\al_{\overline{\la}}\equiv 0\Md p\}|}{p(n)}.\]
Since $\overline{\la}_1\geq c\sqrt{n}\log(n)$, the theorem holds for $\overline{\la}$ by Lemma \ref{l3} and the Murnaghan-Nakayama formula. So the statement of the theorem holds also for $\mu$.
\end{proof}

\section*{Acknowledgements}

The author thanks Alexander Miller for bringing this problem to her attention and for some discussion.

The author was supported by the DFG grant MO 3377/1-1.

\end{document}